\newtheorem{theorem}{Theorem}
\newtheorem{proposition}[theorem]{Proposition}
\newtheorem{corollary}[theorem]{Corollary}
\newtheorem{lemma}[theorem]{Lemma}
\begin{document}

\begin{frontmatter}

\title{The Overfull Conjecture on Split-Comparability Graphs}







\author[ufscar]{Jadder B. de Sousa Cruz}
\ead{bismarck.sc@gmail.com}
\author[ufscar]{C\^{a}ndida N. da Silva\corref{mycorrespondingauthor}}
\cortext[mycorrespondingauthor]{Corresponding author}
\ead{candida@ufscar.br}
\author[utfpr]{Sheila M. de Almeida}
\ead{sheilaalmeida@utfpr.edu.br}

\address[ufscar]{DComp-So -- \textsc{ccgt} -- \textsc{ufsc}ar -- Sorocaba, 
\textsc{sp},  Brazil}
\address[utfpr]{DAINF --  \textsc{utfpr}  --  Ponta Grossa, \textsc{pr}, Brazil}


\begin{abstract}
We show in this paper that a split-comparability graph $G$ has chromatic index equal to $\Delta(G) + 1$ if and only if $G$ is neighborhood-overfull. That implies the validity of the Overfull Conjecture for the class of split-comparability graphs. 
\end{abstract}

\begin{keyword}
edge coloring \sep chromatic index \sep classification problem \sep split graphs \sep comparability graphs
\end{keyword}

\end{frontmatter}

\section{Introduction}\label{intro}

Let $G = (V(G),E(G))$ be a simple and undirected graph with vertex set $V(G)$, edge set $E(G)$, $|V(G)| = n$ and $|E(G)| = m$. An \emph{edge coloring} is a mapping $c : E(G) \rightarrow C$, where $C$ is a set of colors and for any two adjacent edges $e$ and $f$,  $c(e) \neq c(f)$. When $|C|=k$, we say $c$ is a \emph{$k$-edge-coloring}. If for a vertex $v$ there is an edge incident with $v$ with color $c_1$, we say color $c_1$ is \emph{incident} with $v$, otherwise we say $c_1$ \emph{misses} $v$.  The \emph{chromatic index} of a graph $G$, denoted by $\chi'(G)$, is the smaller $k$ such that $G$ has a $k$-edge-coloring. The maximum degree of $G$ is denoted by $\Delta(G)$, and we say that a vertex with degree equal to $\Delta(G)$ is a \emph{$\Delta(G)$-vertex}. We can easily see that $\chi'(G) \ge \Delta(G)$. In 1964,  Vizing~\cite{vizi64} proved that the upper bound for $\chi'(G)$ is $\Delta(G) + 1$. The \emph{Classification Problem} is to determine for a given graph $G$ if $\chi'(G) =  \Delta(G)$ or $\chi'(G) =  \Delta(G) + 1$. If $\chi'(G) =  \Delta(G)$ then we say $G$ is \emph{Class 1}, otherwise, $G$ is \emph{Class 2}. The Classification Problem is $\mathcal{NP}$-Complete~\cite{holy81}.

A graph $G$ is \emph{overfull} if $n$ is odd and  $m > (n  - 1)\Delta(G)/2$. An overfull graph has therefore more edges than any $\Delta(G)$-edge-coloring can color, i.~e., it is necessarily Class 2. A graph $G$ is \emph{subgraph-overfull} if it has a subgraph $H$ with $\Delta(G) = \Delta(H)$ and $H$ is overfull. The \emph{neighborhood} of a vertex $v$, denoted by $N(v)$, is the set of vertices adjacent to $v$.  The \emph{closed neighborhood} of a vertex $v$ is the set $N[v] = N(v) \cup \{v\}$. Let $X$ be a set of vertices, then $N(X)$ is the union of $N(x)$ for every $x \in X$. Similarly, $N[X]$ is the union of $N[x]$ for every $x \in X$. The subgraph of $G$ \emph{induced} by $X \subset V(G)$, denoted by $G[X]$, is the graph whose vertex set is $X$ and whose edge set consists of all of the edges in $E(G)$ that have both endpoints in $X$. Similarly, the subgraph of $G$ \emph{induced} by $A \subset E(G)$, denoted by $G[A]$, is the graph whose vertex set consists of all of the endpoints of edges in $A$ and whose edge set is $A$. A graph $G$ is \emph{neighborhood-overfull} if for some $\Delta(G)$-vertex $v$, $G[N[v]]$ is overfull. Every subgraph-overfull or neighborhood-overfull graph must also be Class 2. Chetwynd and Hilton~\cite{chehil86} proposed the celebrated Overfull Conjecture, which states that a graph $G$ with $\Delta(G) > n/3$ is Class 2 if and only if it is subgraph-overfull.
The Overfull Conjecture remains open, but has been proved for several classes of graphs. Plantholt~\cite{plan81} proved it for graphs containing a \emph{universal vertex}, i.~e., a vertex adjacent to every other vertex of the graph. Hoffman and Rodger~\cite{horo92} proved it for \emph{complete multipartite graphs}, i.~e., graphs whose vertex set can be partitioned into subsets such that a pair of vertices is adjacent if and only if they belong to distinct subsets.  

A \emph{clique} is a set of vertices whose induced subgraph is complete, that is, every two distinct vertices in the clique are adjacent. A clique is \emph{maximal} if no other clique contains it. A \emph{stable set} is  a set of vertices whose induced subgraph is empty, that is, every two distinct vertices in the stable set are not adjacent. A graph $G$ is a \emph{split} graph if its vertices can be partitioned into two sets, say $Q$ and $S$, such that $Q$ is a clique and $S$ is a stable set. We assume $Q$ is maximal and  both $Q$ and $S$ are not empty throughout this paper. We denote by $B$ the bipartite subgraph induced by the edges with one endpoint in $Q$ and other endpoint in $S$. We denote by $d_Q$ the maximum degree of vertices of $Q$ in $B$ and $d_S$ the maximum degree of vertices of $S$ in $B$. 

The well structured subclasses of split graphs for which the Classification Problem is solved are the complete split graphs~\cite{chfuko95}, the split-indifference graphs~\cite{orti98} and all split graphs with odd $\Delta(G)$~\cite{chfuko95}. There are other partial results on the Classification Problem for split graphs; we refer the reader to the thesis~\cite{alme12} for a rather complete description of such partial results. Some of these results are used in the proof of Theorem~\ref{thm:principal}, and stated in Section~\ref{section:premi}.

A \emph{chordal} graph is a graph where each cycle with length at least 4 has at least one chord. Every split graph is a chordal graph~\cite{foha76}. Figueiredo, Meidanis and Mello~\cite{fimeme91} conjectured that every Class 2 chordal graph is neighborhood-overfull.
In this paper we solve the Classification Problem for a subclass of split graphs known as split-comparability graphs, defined in the next section. We also show that the main result in this paper implies that both the Overfull Conjecture and the Conjecture of Figueiredo, Meidanis and Mello hold for split-comparabilty graphs. The proof of our main result is presented in Section~\ref{section:proof} but, before that, we will present in Section~\ref{section:premi} some foundations that made this result possible.

\section{Preliminaries}\label{section:premi}


A \emph{transitive orientation} of a graph $G$ is an orientation of its edges such that whenever the directed edges $(x,y)$ and $(y,z)$ exist, then so does $(x,z)$. A graph $G$ is a \emph{comparability graph} if it admits a transitive orientation of its edges.  A \emph{split-comparability graph} is a split graph  that is also a comparability graph. The following theorem shows a characterization of split-comparability graphs.

\begin{theorem}~\cite{orvi96}
A split graph $G = (S \cup Q, E(G))$ is a split-comparability graph if and only if $Q$ can be totally ordered $v_1 < v_2 < \ldots < v_r$ and partitioned into three (possibly empty) segments $Q_l = \{v_1, v_2, \ldots, v_p\}$, $Q_r = \{v_q,v_{q+1},\ldots,v_r\}$ and $Q_t = Q \setminus (Q_l \cup Q_r)$ such that $N(s)$ has one of the following forms for every vertex $s \in S$: (i) $\{v_1,\ldots,v_i\}$, $i \leq p$; (ii) $\{v_j,\ldots,v_r\}$, $q \leq j \leq r$; (iii) $\{v_1,\ldots,v_i\} \cup \{v_j,\dots,v_r\}$, $i \leq p$ and $q \leq j \leq r$.
\label{thm:split-comp}
\end{theorem}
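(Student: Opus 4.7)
The plan is to prove both directions by exploiting a transitive orientation of $G$.

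For the forward direction, I would take a transitive orientation $\sigma$ of $G$ and note that since $Q$ is a clique, its restriction under $\sigma$ is a tournament; transitivity forces this tournament to be acyclic, yielding a total order $v_1 < v_2 < \cdots < v_r$ where $v_i < v_j$ encodes that $\sigma$ orients the edge as $v_i \to v_j$. Next, for each $s \in S$ I would analyze how transitivity constrains the orientation of the edges incident with $s$: if $s \to v_i$ and $i < j$ (so $v_i \to v_j$), then $s \to v_j$ must also appear in $\sigma$, so a single outgoing arc from $s$ forces outgoing arcs to every larger index. Symmetrically, $v_i \to s$ forces $v_k \to s$ for every $k < i$. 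Hence $N(s) \cap Q$ decomposes as the union of a lower interval $\{v_1,\ldots,v_i\}$ (all incoming at $s$) and an upper interval $\{v_j,\ldots,v_r\}$ (all outgoing from $s$).

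Defining $p$ as the largest index appearing in any such lower interval and $q$ as the smallest index appearing in any such upper interval (with trivial extreme values if the corresponding collection is empty), I would then establish the key separation $p < q$. If some $s \in S$ had $v_i$ in its lower interval and some $s' \in S$ had the same $v_i$ in its upper interval, then $s' \to v_i$ and $v_i \to s$ would force via transitivity an arc between $s'$ and $s$, contradicting that $S$ is a stable set. Setting $Q_l = \{v_1,\ldots,v_p\}$, $Q_r = \{v_q,\ldots,v_r\}$, and $Q_t = Q \setminus (Q_l \cup Q_r)$ then yields the required partition and neighborhood forms (i)--(iii).

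For the backward direction I would define an orientation by orienting $v_i \to v_j$ whenever $i < j$, orienting $v_k \to s$ for every $v_k$ in the lower interval of each $s$, and $s \to v_k$ for every $v_k$ in the upper interval of each $s$. The remaining task is a short case analysis on the middle vertex $y$ of a directed length-two path $x \to y \to z$: when $y \in Q$ the subcases $v_i \to v_j \to v_k$, $v_i \to v_j \to s$, and $s \to v_j \to v_k$ each follow directly from the interval structure, while the configuration $s \to v_j \to s'$ cannot occur because it would require $j \geq q$ and $j \leq p$ simultaneously; when $y = s \in S$ the only nontrivial subcase is $v_i \to s \to v_j$, which yields $i \leq p < q \leq j$, so the edge $v_i \to v_j$ is indeed present.

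The principal obstacle I anticipate is establishing the strict inequality $p < q$; once this separation of lower from upper intervals has been extracted from the stability of $S$ combined with transitivity, everything else reduces to routine case-checking.
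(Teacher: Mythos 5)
The paper does not prove this statement at all; it is quoted as a known characterization from the cited reference \cite{orvi96}, so there is no internal proof to compare against. Your blind proof is correct and self-contained: the forward direction (linear order on the clique from the transitive tournament, in-/out-neighborhoods of each $s\in S$ forming a lower/upper interval, and the separation $p<q$ from stability of $S$) and the backward verification by cases on the middle vertex of a directed path are exactly the standard argument for this characterization. The only point worth one extra line is the case $s=s'$ in your $p<q$ argument (the same $v_i$ cannot be simultaneously an in- and out-neighbor of a single $s$), which is immediate; with that remark the proof is complete.
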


The sets with neighborhood with forms $(i)$, $(ii)$ e $(iii)$ are denoted by $S_l$, $S_r$ and $S_t$, respectively. We assume, without loss of generality, that $p = |N(S_l)|$ and $|N(S_r)| = r - q + 1$. In other words, the only vertices of $Q$ that are not incident with edges of $B$ are precisely those in $Q_t$.

A \emph{color class} in an edge coloring is the set of all edges with the same color. An edge coloring is \emph{balanced} if the cardinality of any two different color classes differ by at most 1.
A \emph{universal vertex} is a vertex adjacent to all the other vertices in the graph, so its degree is $n - 1$. The next results are well known.

\begin{lemma}~\cite{fofu69}
If $G$ has a $k$-edge-coloring then $G$ has a balanced $k$-edge-coloring.
\label{lem:colo_equi}
\end{lemma}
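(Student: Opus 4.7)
The plan is to apply a Kempe-chain swapping argument. Starting from any $k$-edge-coloring $c$, suppose there exist two color classes $E_i$ and $E_j$ with $\bigl||E_i| - |E_j|\bigr| \geq 2$; without loss of generality $|E_i| \geq |E_j| + 2$. I would consider the spanning subgraph $H_{ij}$ of $G$ consisting of all edges colored $i$ or $j$. Because $c$ is a proper edge-coloring, each vertex is incident with at most one edge of color $i$ and at most one of color $j$, so $\Delta(H_{ij}) \leq 2$. Hence $H_{ij}$ decomposes into vertex-disjoint simple paths and simple cycles, and along each such component the colors $i$ and $j$ strictly alternate.

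Next I would analyze the contribution of each component to $|E_i| - |E_j|$. Every cycle in $H_{ij}$ is properly 2-edge-colored, so it has even length and contributes $0$ to the difference. Similarly an even-length path contributes $0$; an odd-length path whose two end-edges are both colored $i$ contributes $+1$, and one whose two end-edges are both colored $j$ contributes $-1$. Since $|E_i| - |E_j| \geq 2 > 0$, there must exist at least one odd-length path $P$ of $H_{ij}$ whose end-edges are colored $i$.

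The key move is to recolor $P$ by swapping colors $i$ and $j$ along it. The resulting coloring $c'$ is still a proper $k$-edge-coloring, because the swap is performed on a Kempe component and no conflict is introduced at the endpoints of $P$ (the other colors at those endpoints are untouched). After the swap $|E_i|$ decreases by $1$ and $|E_j|$ increases by $1$, so $|E_i| - |E_j|$ decreases by $2$. I would then iterate: while some pair of classes differs in size by at least $2$, perform such a swap. Termination follows from the monovariant $\Phi(c) = \sum_{\ell=1}^k |E_\ell|^2$, which is a nonnegative integer and strictly decreases with each swap (a direct computation gives $\Phi(c') - \Phi(c) = -2(|E_i| - |E_j|) + 2 \leq -2$).

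There is essentially no obstacle: the structure of a proper 2-coloring forces $H_{ij}$ to be a disjoint union of paths and even cycles, and the sign of $|E_i| - |E_j|$ forces the existence of a suitable odd path, so the only thing to check is the monovariant for termination, which is routine.
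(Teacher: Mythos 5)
Your argument is correct and complete: the decomposition of the two-colored subgraph into alternating paths and even cycles, the existence of an odd path with both end-edges in the larger class, the properness of the swap, and the $\sum_\ell |E_\ell|^2$ monovariant all check out. Note that the paper does not prove this lemma at all --- it is quoted as a known result from the cited reference --- and your Kempe-chain balancing argument is essentially the standard proof of that classical result, so there is nothing to reconcile.
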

\begin{theorem}~\cite{plan81}
Let $G$ be a graph with a universal vertex, then $G$ is Class 2 if and only if $G$ is overfull.
\label{thm:vert-universal}
\end{theorem}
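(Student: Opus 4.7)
The plan is to handle the two directions of the biconditional separately. The forward direction, that every overfull graph is Class 2, is a general edge-counting fact independent of the universal-vertex hypothesis and already noted in the introduction: any proper $\Delta$-edge-coloring partitions $E(G)$ into $\Delta$ matchings, each of size at most $\lfloor n/2 \rfloor$, so for odd $n$ the overfull inequality $m > (n-1)\Delta/2$ precludes $\chi'(G) = \Delta$. For the converse, suppose $G$ has a universal vertex $u$, so $\Delta(G) = n-1$, and assume $G$ is not overfull; the task is to exhibit a proper $(n-1)$-edge-coloring. I split on the parity of $n$.

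When $n$ is even, $G \subseteq K_n$ and the classical round-robin 1-factorization of $K_n$ into $n-1$ perfect matchings gives $\chi'(K_n) = n-1$; restricting to $E(G)$ yields an $(n-1)$-edge-coloring of $G$, which combined with $\chi'(G) \geq \Delta(G) = n-1$ places $G$ in Class 1. (The theorem is automatic here, since ``overfull'' requires $n$ odd.)

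When $n$ is odd, the non-overfull hypothesis reads $m(G) \leq (n-1)^2/2$, and one must construct an $(n-1)$-edge-coloring directly. I would pursue the following strategy: any $(n-1)$-edge-coloring of $G$ must use each color exactly once at $u$, so such a coloring is determined by a proper $(n-1)$-edge-coloring of $H = G - u$ together with a bijection $v \mapsto c(uv)$ from $V(H)$ to $\{1, \ldots, n-1\}$ assigning each neighbor of $u$ a color that is missing at $v$ in $H$. Rephrased, after coloring $H$ (which exists with $n-1$ colors by Vizing, since $\Delta(H) \leq n-2$), we need a system of distinct representatives for the family $\{F_v\}_{v \in V(H)}$ of missing-color sets, where $|F_v| = n-1-\deg_H(v)$. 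The global count $\sum_{v \in V(H)} |F_v| = (n-1)^2 - 2m(H) \geq 2(n-1)$ follows directly from the non-overfull bound.

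The main obstacle is ensuring Hall's condition for the family $\{F_v\}$. As a small example already shows, a first attempt at coloring $H$ can produce a color whose class is a perfect matching of $H$, which is then present at every vertex of $H$ and therefore excluded from every $F_v$, killing Hall's condition for $S = V(H)$. The plan is to iteratively revise the coloring of $H$ using alternating-path and Kempe-chain swaps: whenever Hall fails on some $S \subseteq V(H)$, find a pair of colors whose two-colored subgraph admits a swap on a component that enlarges $\bigcup_{v \in S} F_v$, and iterate. The non-overfull inequality is the key input that rules out terminal obstructions to this recoloring, since it forces enough slack in the total missing-color count. Making the recoloring argument precise---showing that it always terminates with a coloring that admits an SDR---is where I expect the technical core of the proof to lie.
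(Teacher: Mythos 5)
First, note that the paper does not prove this statement at all: it is quoted as a known theorem of Plantholt~\cite{plan81}, so there is no in-paper argument to match yours against; your proposal has to stand on its own. The easy parts of your proposal are fine: overfull $\Rightarrow$ Class~2 is the standard counting argument, and for even $n$ the restriction of a $1$-factorization of $K_n$ settles everything since $\Delta(G)=n-1$ and ``overfull'' requires odd order.

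The genuine gap is the odd-$n$ case, which is the entire content of the theorem. Your reformulation---color $H=G-u$ with $n-1$ colors and then find a system of distinct representatives for the missing-color sets $\{F_v\}_{v\in V(H)}$---is correct as an equivalence, but it does not reduce the difficulty: deciding that \emph{some} $(n-1)$-edge-coloring of $H$ admits such an SDR is essentially a restatement of $\chi'(G)=n-1$. The only quantitative input you extract from the non-overfull hypothesis, $\sum_v |F_v|\ge 2(n-1)$, is far too weak to yield Hall's condition, because it says nothing about how the missing colors are distributed: as your own example shows, a single color class that is a perfect matching of $H$ violates Hall at $S=V(H)$ regardless of how much total slack exists, and smaller sets $S$ can fail Hall in ways that no single swap visibly repairs. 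The proposed fix---iterate Kempe-chain swaps until Hall's condition holds---comes with no progress measure, no proof that a failing set always admits a productive swap, and no argument that the process terminates; the sentence ``the non-overfull inequality rules out terminal obstructions'' is precisely the claim that needs proof, and you acknowledge as much. As it stands, the proposal is a plausible plan plus the easy direction, with the technical core of Plantholt's theorem still missing; completing it would require either carrying out that recoloring analysis in detail or switching to a constructive/inductive argument of the kind in~\cite{plan81}.
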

\begin{theorem}~\cite{chfuko95}
Let $G$ be a split graph. If $\Delta(G)$ is odd, then $G$ is Class 1.
\label{thm:chenfuko}
\end{theorem}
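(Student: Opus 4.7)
Let $\Delta = \Delta(G)$ be odd, with $V(G) = Q \cup S$ the split decomposition. The plan is to build a proper $\Delta$-edge-coloring by first edge-coloring the clique $G[Q]$ with a carefully chosen palette and then extending the coloring to the bipartite subgraph $B$ with one endpoint in $Q$ and one in $S$.

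I would begin with two structural reductions. Since $Q$ is maximal, no $w \in S$ can be adjacent to every vertex of $Q$; combined with $d(u) \geq |Q|-1$ for every $u \in Q$, this shows that every $\Delta$-vertex lies in $Q$ and that $|Q| \leq \Delta + 1$. The boundary case $|Q| = \Delta + 1$ is immediate: each $u \in Q$ already has $|Q|-1 = \Delta$ clique-neighbors, so $u$ has no neighbor in $S$; hence $G$ is $K_{\Delta+1}$ together with isolated $S$-vertices, and since $\Delta+1$ is even, a 1-factorization of $K_{\Delta+1}$ yields a $\Delta$-edge-coloring.

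For the main case $|Q| \leq \Delta$, I would color $G[Q]$ using a 1-factorization of $K_{|Q|}$ when $|Q|$ is even, or a near-1-factorization when $|Q|$ is odd. In both cases, at each $v \in Q$ the available palette $M(v) \subseteq \{1,\ldots,\Delta\}$ has size exactly $\Delta - (|Q|-1) \geq \deg_B(v)$. What remains is to extend this partial coloring to a proper edge-coloring of $B$ in which each edge $vw$ with $v \in Q$ is assigned a color drawn from $M(v)$.

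The main obstacle is this extension step. The natural tool is a list edge-coloring theorem for bipartite graphs (Galvin's theorem), which however demands that every list have size at least $\Delta(B)$. When $|Q| > (\Delta+1)/2$, the palette size $|M(v)|$ can be strictly smaller than the maximum degree of an $S$-vertex in $B$, and the direct Galvin application fails. Overcoming this is where the oddness of $\Delta$ must enter substantively---presumably by choosing the 1-factorization (on an even completion of $Q$ if $|Q|$ is odd) so that the missing colors at the various $v \in Q$ align, across common $S$-neighbors, in a Hall-type condition; the alternative would be a Vizing fan argument tailored to the split structure. I expect this alignment to be the crux of the proof, and it is exactly the step that would collapse if $\Delta$ were even, since then $K_{|Q|}$ (with $|Q|$ possibly odd and forced to equal $\Delta+1$) loses its 1-factorization and the palette accounting breaks down.
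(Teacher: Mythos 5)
The paper does not prove this statement at all: it is imported as a known result of Chen, Fu and Ko \cite{chfuko95}, so there is no in-paper argument to compare yours against; what has to be judged is whether your sketch is itself a proof, and it is not. Your reductions are fine (every $\Delta(G)$-vertex in $Q$, $|Q|\le\Delta+1$, the boundary case $|Q|=\Delta+1$, and the palette count $|M(v)|=\Delta-(|Q|-1)\ge \deg_B(v)$ after a (near-)one-factorization of $G[Q]$). But the heart of the theorem is exactly the step you leave open: extending the clique coloring to the bipartite part $B$ when each edge $vw$ with $v\in Q$ must take a color from $M(v)$, a list whose size can equal $\deg_B(v)$ and be strictly smaller than $\deg_B(w)$. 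You correctly note that Galvin's theorem (and likewise the degree-choosability versions for bipartite graphs) does not apply, and then you defer the resolution to an unspecified ``Hall-type alignment'' of the missing colors or a ``Vizing fan argument tailored to the split structure.'' That deferred step is the entire substance of the theorem: an abstract list edge-coloring instance in which every edge at $v$ has a list of size only $\deg_B(v)$ is in general infeasible (two $Q$-vertices of $B$-degree $1$ carrying the same single free color and sharing an $S$-neighbor already kills any naive Hall or greedy argument), so one must actually prove that the specific lists arising from the chosen factorization, together with $\deg_B(w)\le |Q|-1$ and the parity of $\Delta$, always admit the extension. Nothing in the proposal establishes this, and the proposal never pins down where the oddness of $\Delta$ is genuinely used beyond the trivial case $|Q|=\Delta+1$; saying the parity ``presumably'' makes the missing colors align is a conjecture about your own construction, not an argument.

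So, as it stands, this is a reasonable plan of attack with a correct setup and a clearly identified, but unresolved, crux; it should not be presented as a proof of Theorem~\ref{thm:chenfuko}. If you want to pursue it, the work lies in analyzing the structure of the missing-color sets $M(v)$ produced by a (near-)one-factorization of $K_{|Q|}$ (for instance, in the tight case $\Delta=|Q|$ odd, distinct vertices miss distinct colors, which is promising) and proving an extension lemma for $B$ under those particular lists; alternatively, consult the original argument of Chen, Fu and Ko, which does not follow this route.
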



The \emph{core} of graph $G$, denoted by $\Lambda_G$, is the set of $\Delta(G)$-vertices. The   \emph{semicore} of $G$ is the subgraph induced by $N[\Lambda_G]$. We shall denote the semicore of $G$ by $K_G$. The next two theorems are more recent contributions to the theory of edge coloring.

\begin{theorem}~\cite{mafi10}
Let $G$ be a graph and $K_G$  its semicore. Then, $\chi'(G) = \chi'(K_G)$.
\label{thm:seminucleo}
\end{theorem}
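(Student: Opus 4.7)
The easy direction $\chi'(K_G) \le \chi'(G)$ is immediate: $K_G$ is a subgraph of $G$, and since $\Lambda_G \subseteq V(K_G)$ together with the whole neighborhood of each of its vertices, we have $\Delta(K_G) = \Delta(G)$, so any edge coloring of $G$ restricts to one of $K_G$.

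For the harder inequality $\chi'(G) \le \chi'(K_G)$, I would induct on $|V(G) \setminus V(K_G)|$. The base case is trivial. For the inductive step, pick $v \in V(G) \setminus V(K_G)$; by definition $v \notin N[\Lambda_G]$, so $v$ is not a $\Delta(G)$-vertex and no neighbor of $v$ lies in $\Lambda_G$. Two consequences: every neighbor $u$ of $v$ satisfies $\deg_G(u) \le \Delta(G) - 1$, and $\Lambda_{G-v} = \Lambda_G$ with $N_{G-v}[\Lambda_{G-v}] = N_G[\Lambda_G]$ (removing $v$ does not alter the $\Delta$-vertices nor any of their incidences). Therefore $K_{G-v} = K_G$, and by the induction hypothesis $G-v$ admits a $k$-edge-coloring $c$ with $k = \chi'(K_G)$.

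The heart of the proof is extending $c$ to the edges at $v$. Note that $\deg_G(v) \le \Delta(G) - 1$ and, for each neighbor $u$ of $v$, $\deg_{G-v}(u) \le \Delta(G) - 2$, so each $u$ has at least $k - \Delta(G) + 2 \ge 2$ colors free under $c$. When $k = \Delta(G) + 1$, the lists at the neighbors have size $\ge 3$ and greedy extension at $v$ (or a direct Hall-type argument on the star at $v$) works without any modification of $c$. The delicate case is $k = \Delta(G)$, where the lists have size exactly two. Here I would extend one edge $vu$ at a time, and whenever a direct assignment fails I would apply a Vizing fan rooted at $v$ and a Kempe $\alpha\beta$-chain swap, where $\alpha$ misses $v$ and $\beta$ misses the current endpoint. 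The crucial slack that makes the argument close with only $k = \Delta(G)$ colors is that every vertex in $\{v\} \cup N_G(v)$ has degree strictly less than $\Delta(G)$, so a missing color always exists at the fan vertices and the swap that creates room for the new edge does not damage the coloring already fixed on $K_G$.

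The main obstacle, and where I would spend most of the work, is this extension step in the case $k = \Delta(G)$: one must argue that the fan/Kempe-chain recoloring invoked at $v$ terminates and preserves the property that the restriction to $K_G$ remains a valid $k$-edge-coloring. I expect this to follow from the observation that any $\alpha\beta$-component encountered lies outside $\Lambda_G$ whenever the swap is required to free up a color at $v$, so $\Delta(G)$-vertices never see color shifts that would raise the number of colors needed.
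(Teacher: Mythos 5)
First, note that the paper does not prove this theorem at all: it is quoted from Machado and de Figueiredo~\cite{mafi10}, so there is no in-paper argument to compare against. Your inductive plan is a legitimate direct route and its skeleton is correct: for $v\notin N[\Lambda_G]$ you rightly observe that $v$ and all its neighbours have degree at most $\Delta(G)-1$, that $\Lambda_{G-v}=\Lambda_G$ and $K_{G-v}=K_G$, so the induction hypothesis yields a $k$-edge-coloring of $G-v$ with $k=\chi'(K_G)\ge\Delta(K_G)=\Delta(G)$, and the whole issue is recoloring so as to absorb the star at $v$. A commonly used alternative (and, as far as we know, the spirit of the published proof) avoids any extension argument: if $G$ were Class 2 while $K_G$ is Class 1, take a $\Delta(G)$-critical Class 2 subgraph $H$ with $\Delta(H)=\Delta(G)$; by Vizing's Adjacency Lemma every vertex of $H$ is adjacent in $H$ to a $\Delta(H)$-vertex, hence $V(H)\subseteq N[\Lambda_G]$ and $H\subseteq K_G$, contradicting $\chi'(K_G)=\Delta(G)$. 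That route buys brevity; yours buys an explicit coloring procedure.

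Two points in your write-up need repair, though neither is fatal. First, the requirement you impose on yourself --- that the recoloring ``preserves the restriction to $K_G$'' --- is unnecessary: the goal is only to exhibit some $k$-edge-coloring of $G$, and Kempe $\alpha\beta$-swaps preserve properness no matter which vertices the component visits; in particular your closing claim that every $\alpha\beta$-component involved avoids $\Lambda_G$ is not true in general and should simply be deleted, since nothing depends on it. Second, in the case $k=\Delta(G)+1$ your justification (lists of size at least $3$ at the neighbours, ``greedy or Hall'') is insufficient as stated --- list sizes alone do not give a system of distinct representatives for up to $\Delta(G)-1$ star edges --- but this case needs no work at all, because $\chi'(G)\le\Delta(G)+1=k$ by Vizing's theorem. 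The genuinely delicate case $k=\Delta(G)$ does close exactly for the reason you identify: the Vizing fan is rooted at $v$ and its fan vertices are neighbours of $v$, every vertex of $N[v]$ has degree at most $\Delta(G)-1$ and hence always misses a colour among the $\Delta(G)$ available, and the interior vertices of a Kempe chain never need a missing colour; writing out that standard fan/rotation argument would complete your proof.
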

\begin{theorem}~\cite{alme12}
Let $G = (S \cup Q, E(G))$ be a split graph. If it has a vertex $v \in S$ with $|Q|/2 \leq d(v) \leq \Delta(G) / 2$ then $G$ is Class 1.
\label{thm:sheila}
\end{theorem}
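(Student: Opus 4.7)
Write $\Delta = \Delta(G)$, $q = |Q|$, and $d = d(v)$; the hypothesis $q/2 \leq d \leq \Delta/2$ gives $q \leq 2d \leq \Delta$. My plan is to exhibit a proper $\Delta$-edge-coloring of $G$ in two stages: first color the clique $K_Q$ together with the star at $v$, and then extend to the remainder of the bipartite part $B$. By Theorem~\ref{thm:chenfuko} I may assume $\Delta$ is even; otherwise $G$ is already Class~1.

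For the first stage I would take a proper edge coloring of $K_Q$ using $q$ colors if $q$ is odd and $q-1$ colors if $q$ is even. In the odd case each vertex $u \in Q$ misses a unique color $\mu(u)$, and $\mu$ is a bijection from $Q$ onto $\{1,\dots,q\}$; I would then color each edge $vu_i$ with $\mu(u_i)$, which is proper at $v$ because the $\mu(u_i)$ are pairwise distinct and proper at $u_i$ by construction. The even case reduces to the same situation after shifting a well-chosen matching of $K_Q$ to a fresh color $q$, so that again each vertex misses one specific color. The role of the hypothesis $d \geq q/2$ in this stage is to ensure that the $d$ edges of the star at $v$ can be routed through this ``missing color'' structure without forcing conflicts at $v$.

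The second stage is to edge-color what remains, namely the bipartite subgraph $B'$ between $Q$ and $S \setminus \{v\}$. After stage one, each $u \in Q$ has a list $L(u)$ of available colors of size at least $\Delta - q + 1$, which is at least the number $d_{B'}(u) \leq \Delta - q + 1$ of $B'$-edges still incident with $u$, while each $s \in S \setminus \{v\}$ still has all $\Delta$ colors available. Because $B'$ is bipartite, the natural tool is Galvin's list edge coloring theorem, which would complete the proof whenever the edge-lists are large enough.

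I expect the main obstacle to be in this second stage when some $s \in S \setminus \{v\}$ has $d(s)$ close to $\Delta$: then $\Delta(B')$ may exceed the list size $\Delta - q + 1$ at its $Q$-neighbors, and Galvin does not apply directly. To overcome this I would use the slack provided by the other hypothesis $d \leq \Delta/2$: at the end of stage one, at least $\Delta - d \geq \Delta/2$ colors are missing at $v$, and these can absorb Kempe-chain swaps along alternating paths rooted at $v$, thereby freeing up a target color at any troublesome $Q$-vertex without breaking the partial coloring elsewhere. Making these swaps precise—and verifying that they can be performed iteratively until the full coloring is produced—is the central technical step of the proof, and it is exactly there that both inequalities $d \geq q/2$ and $d \leq \Delta/2$ are simultaneously exploited.
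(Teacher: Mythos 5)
This statement is one the paper does not prove at all: it is quoted from~\cite{alme12}, so there is no in-paper argument to compare against, and your sketch has to stand on its own. As it stands it is not a proof; it is a plan with the decisive step missing, and one of the two stages that are described concretely does not work as written. In Stage~1, for $|Q|$ even, shifting a matching of $K_Q$ to a new color $q$ cannot make the ``missing color'' map injective: the two endpoints of each shifted edge miss the \emph{same} color, so $\mu$ is 2-to-1, and coloring the star edges $vu_i$ with $\mu(u_i)$ conflicts at $v$ as soon as $v$ is adjacent to both endpoints of a shifted edge. Avoiding that would require matching every neighbor of $v$ in $Q$ to a non-neighbor, which needs $d(v)\le |Q|/2$ --- the hypothesis $d(v)\ge |Q|/2$ points in the opposite direction, so this trick fails exactly in the regime the theorem is about. (The odd-$|Q|$ case is fine, but note that there the hypothesis $d(v)\ge |Q|/2$ is not used at all, which already signals that the inequalities are not really being exploited where you claim they are.)

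The more serious issue is Stage~2, which you yourself identify as ``the central technical step'' and then do not carry out. After Stage~1 the list of an edge $us$ ($u\in Q$, $s\in S\setminus\{v\}$) has size roughly $\Delta(G)-|Q|$, while $d_{B'}(s)$ can be as large as $\Delta(G)$; a Galvin/Borodin--Kostochka--Woodall argument needs each edge list to have size at least $\max(d_{B'}(u),d_{B'}(s))$, so it fails badly, e.g.\ already when $\Delta(G)=|Q|$ and $d(v)=|Q|/2=\Delta(G)/2$, which is perfectly consistent with the hypothesis and leaves essentially empty lists. The proposed rescue --- Kempe chains ``rooted at $v$'' absorbing the deficiencies ``without breaking the partial coloring elsewhere'' --- is an assertion, not an argument: an alternating chain started at a troublesome $Q$-vertex need not end at $v$ or involve any color missing at $v$, and every swap changes the set of missing colors at all internal vertices of the chain, so the invariant you would need to iterate is neither stated nor preserved by anything you describe. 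Since both inequalities $|Q|/2\le d(v)$ and $d(v)\le\Delta(G)/2$ enter only through this unproved step, the proposal does not yet contain the substance of the theorem; you would need either a worked-out extension/recoloring lemma for the bipartite remainder or a different global construction (for instance, building an auxiliary supergraph with a tractable structure and coloring $G$ as its subgraph, in the spirit of how the present paper uses Theorem~\ref{thm:sheila} itself).
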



A graph $G$ is \emph{saturated} if $n$ is odd and $\overline{G}$ has exactly $\Delta(G)/2$ edges. In other words, $G$ has the maximum number of edges possible without being overfull, therefore $(n-1)\Delta(G)/2$ edges. Every saturated graph has a universal vertex. 
The following propositions are needed in the proof of our main theorem. Their proofs are omitted as these are straightforward consequences of the definitions of a balanced edge coloring and a split-comparability graph, respectively.

\begin{proposition}
Let $G$ be a saturated graph. Then, in any balanced edge coloring of $G$ with  $\Delta(G)$ colors, every color misses exactly one vertex.
\label{prop:satur_sobra}
\end{proposition}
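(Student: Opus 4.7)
The plan is to exploit the exact edge count forced by the definition of a saturated graph, together with the fact that a balanced coloring distributes edges as evenly as possible across color classes.

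First, I would unpack the hypothesis. Since $G$ is saturated, $n$ is odd and $\overline{G}$ has exactly $\Delta(G)/2$ edges, so $G$ itself has
\[
m \;=\; \binom{n}{2} - \frac{\Delta(G)}{2} \;=\; \frac{(n-1)\Delta(G)}{2}
\]
edges (this is the quantity the paper already identifies as the maximum possible without being overfull). Note in particular that $\Delta(G)$ must be even, since $\overline{G}$ has an integer number of edges, and that $m/\Delta(G)=(n-1)/2$ is an integer because $n$ is odd.

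Next, I would apply the definition of a balanced edge coloring. In a balanced $\Delta(G)$-edge-coloring, any two color classes differ in size by at most $1$, and their sizes sum to $m$. Because $m$ is exactly divisible by $\Delta(G)$, the only way to distribute $m$ edges into $\Delta(G)$ classes of almost-equal size is to make every class have precisely $m/\Delta(G) = (n-1)/2$ edges.

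Finally, I would observe that each color class is a matching of $G$. A matching with $(n-1)/2$ edges saturates exactly $n-1$ vertices, so precisely one vertex of $G$ is left uncovered, and that vertex is by definition the one missed by the corresponding color. This gives the claim.

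There is essentially no obstacle here: the statement is a direct arithmetic consequence of the edge count of a saturated graph combined with the definitions of balanced coloring and matching. The only thing to be mildly careful about is to justify that $m/\Delta(G)$ is an integer (using $n$ odd) so that the balanced sizes are forced to coincide rather than split into two values differing by $1$.
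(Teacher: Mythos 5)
Your proof is correct and is exactly the straightforward counting argument the paper intends (it omits the proof as an immediate consequence of the definitions): every one of the $\Delta(G)$ color classes must have exactly $(n-1)/2$ edges, and a matching of that size covers $n-1$ vertices, hence misses exactly one. The only implicit step is that your equality $n(n-1)/2 - \Delta(G)/2 = (n-1)\Delta(G)/2$ is not a formal identity but relies on $\Delta(G) = n-1$, which holds because every saturated graph has a universal vertex, a fact the paper records just before the proposition.
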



\begin{proposition}
Let $G = (S \cup Q, E(G))$ be a split-comparability graph. There is at least one vertex in $Q_l$ ($Q_r$) adjacent to every vertex in $S_l \cup S_t$ ($S_r \cup S_t$).
\label{prop:qlsl}
\end{proposition}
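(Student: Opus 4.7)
The plan is to take $v_1 \in Q_l$ (respectively, $v_r \in Q_r$) as the witness and verify it is adjacent to every vertex of $S_l \cup S_t$ (respectively, $S_r \cup S_t$) directly from the characterization in Theorem~\ref{thm:split-comp}. No machinery beyond that theorem is needed.

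For the first half, I would unpack the neighborhood forms. By Theorem~\ref{thm:split-comp}, any $s \in S_l$ has $N(s) = \{v_1, \ldots, v_i\}$ with $i \leq p$, and any $s \in S_t$ has $N(s) = \{v_1, \ldots, v_i\} \cup \{v_j, \ldots, v_r\}$ with $i \leq p$ and $q \leq j \leq r$. In both forms the ``left segment'' is non-empty: for form (i) this is the entire neighborhood, and for form (iii) the left segment is non-empty by the very definition of type (iii) (otherwise $s$ would be of type (ii) and hence belong to $S_r$, not $S_t$). Hence $v_1 \in N(s)$ for every $s \in S_l \cup S_t$. If $S_l \cup S_t \neq \emptyset$ the existence of such an $s$ forces $p \geq 1$, so $v_1 \in Q_l$ as required; the case $S_l \cup S_t = \emptyset$ is vacuous.

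The second half follows by the symmetric argument, obtained by reversing the total order $v_1 < v_2 < \cdots < v_r$ on $Q$: in that reversed order $v_r$ plays the role that $v_1$ played above, and the right segment of each neighborhood in $S_r \cup S_t$ is non-empty by the same type-distinction argument.

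There is essentially no obstacle. The only mild point to be careful about is ensuring the left (respectively right) segment of the neighborhood is non-empty for vertices of $S_t$, which is immediate from the fact that types (i), (ii), (iii) are meant to be distinct in the characterization. The proposition is a direct unpacking of the interval structure imposed by Theorem~\ref{thm:split-comp}, which is exactly why the authors classified its proof as straightforward and omitted it.
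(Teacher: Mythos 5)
Your proof is correct and is exactly the straightforward unpacking of Theorem~\ref{thm:split-comp} that the paper intends: the authors omit the proof of Proposition~\ref{prop:qlsl} as an immediate consequence of the definition of a split-comparability graph, with $v_1$ (respectively $v_r$) serving as the witness. Your attention to the non-emptiness of the left (right) segment for vertices of $S_t$, and to the fact that $S_l \cup S_t \neq \emptyset$ forces $p \geq 1$ so that $v_1$ indeed lies in $Q_l$, covers the only points that need checking.
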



\begin{corollary} 
Let $G= (S \cup Q, E(G))$ be a split-comparability graph. The subgraph induced by $Q \cup S_l \cup S_t$ ($Q \cup S_r \cup S_t$) has a universal vertex.
\label{coro:v_univer}
\end{corollary}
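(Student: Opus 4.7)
The plan is to derive this corollary directly from Proposition~\ref{prop:qlsl} together with the fact that $Q$ is a clique in the split graph $G$. The two statements in the corollary are symmetric, so I would handle only the case $Q \cup S_l \cup S_t$ and remark that the other follows analogously by swapping the roles of $Q_l,S_l$ with $Q_r,S_r$.

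First I would invoke Proposition~\ref{prop:qlsl} to obtain a vertex $v \in Q_l$ that is adjacent to every vertex of $S_l \cup S_t$. Next I would observe that since $v \in Q_l \subseteq Q$ and $Q$ is a clique, $v$ is adjacent to every other vertex of $Q$. Combining these two facts, $v$ is adjacent to every vertex in $(Q \setminus \{v\}) \cup S_l \cup S_t$, which is precisely the set of all vertices other than $v$ in the subgraph induced by $Q \cup S_l \cup S_t$. Hence $v$ is a universal vertex of that induced subgraph.

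There is no real obstacle here, as everything is a one-line consequence of the preceding proposition and the definition of a split graph; the argument is essentially a bookkeeping exercise to ensure the vertex supplied by Proposition~\ref{prop:qlsl} sees all of $Q$ in addition to all of $S_l \cup S_t$.
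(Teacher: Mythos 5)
Your proof is correct and matches the argument the paper intends: the vertex supplied by Proposition~\ref{prop:qlsl} is adjacent to all of $S_l \cup S_t$ (resp.\ $S_r \cup S_t$), and since $Q$ is a clique it sees the rest of $Q$ as well, making it universal in the induced subgraph. The paper omits this proof as immediate, and your bookkeeping is exactly the intended one-liner.
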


\section{Solving the Classification Problem for Split-Comparability Graphs}\label{section:proof}

In this section we present the main result, Theorem~\ref{thm:principal}, stated below.

\begin{theorem}
A split-comparability graph $G$ is Class 2 if and only if $G$ is neighborhood-overfull. 
\label{thm:principal}
\end{theorem}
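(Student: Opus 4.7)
The easy direction ($\Leftarrow$) holds because every neighborhood-overfull graph is Class~2, as noted in the introduction. For the converse I argue the contrapositive: assuming $G$ is not neighborhood-overfull, the plan is to exhibit a $\Delta$-edge-coloring of $G$, where $\Delta := \Delta(G)$. By Theorem~\ref{thm:chenfuko} I may assume $\Delta$ is even. A direct count then forces every $\Delta$-vertex to lie in $Q$: if some $s \in S$ had $d(s) = \Delta$, then $N(s) \subseteq Q$ would be a clique of size $\Delta$, and $G[N[s]]$ would have $\Delta + \Delta(\Delta-1)/2 = \Delta(\Delta+1)/2$ edges on $\Delta+1$ vertices, exceeding the overfull bound $\Delta^2/2$ and contradicting the hypothesis. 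Using the ordering $v_1 < \ldots < v_r$ of Theorem~\ref{thm:split-comp}, the $B$-degree $|N_B(v_i)|$ is non-increasing on $Q_l$ and non-decreasing on $Q_r$, while each $v \in Q_t$ has degree exactly $|Q|-1$ in $G$; hence $\Delta$ is attained by $v_1$ or $v_r$, and after swapping $Q_l$ and $Q_r$ if necessary I assume $\Delta = d(v_1) \ge d(v_r)$.

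The generic situation is resolved by the semicore. The identity $N[v_1] = Q \cup S_l \cup S_t = V(G_l)$ shows that $G[N[v_1]]$ coincides with the subgraph $G_l$ of Corollary~\ref{coro:v_univer}, in which $v_1$ is universal. If $d(v_1) > d(v_r)$, then $\Lambda_G \subseteq Q_l$, and since every $u \in Q_l$ has $N[u] \subseteq V(G_l)$, the semicore satisfies $K_G = G_l$. Because $G$ is not neighborhood-overfull, $G_l$ is not overfull; Theorem~\ref{thm:vert-universal} therefore gives $\chi'(G_l) = \Delta$, and Theorem~\ref{thm:seminucleo} yields $\chi'(G) = \chi'(K_G) = \Delta$. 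The borderline case $d(v_1) = d(v_r) = \Delta$ with $|S_l| = |S_r| = 0$ is even easier, because then $v_1$ is universal in $G$ itself and Theorem~\ref{thm:vert-universal} finishes directly on $G$.

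The genuinely hard case, which I expect to be the main obstacle, is $d(v_1) = d(v_r) = \Delta$ with $|S_l| = |S_r| \ge 1$: both $v_1$ and $v_r$ are $\Delta$-vertices so $K_G = G$ and $G$ has no universal vertex, so neither the semicore collapse nor Theorem~\ref{thm:vert-universal} alone applies. My plan is to start from a $\Delta$-edge-coloring of $G_l$ (which is available by the argument of the previous paragraph, since $G_l$ is again non-overfull with universal vertex $v_1$) and extend it to the only edges of $G$ missing from $G_l$, namely those between $Q_r$ and $S_r$. For each $v \in Q_r$ the palette $M(v)$ of colors missing at $v$ after coloring $G_l$ has size $\Delta - d_{G_l}(v) \ge |N_B(v) \cap S_r|$, so local capacity is always sufficient; the difficulty is global compatibility at the $S_r$ endpoints, which reduces to a list-edge-coloring of the bipartite $Q_r$--$S_r$ subgraph in which each edge $vs$ carries the list $M(v)$. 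To force this extension I plan to exploit the interval structure of split-comparability (each $s \in S_r$ has $N(s) = \{v_j,\ldots,v_r\}$, a suffix of $Q$), apply Lemma~\ref{lem:colo_equi} first to balance the $G_l$-coloring so that the $M(v)$'s are as uniform as possible, combine the non-overfull inequalities at $v_1$ and at $v_r$, and invoke Theorem~\ref{thm:sheila} as a fallback whenever some $s \in S$ satisfies $|Q|/2 \le d(s) \le \Delta/2$. Weaving these ingredients together so that the extension is always feasible is the delicate step of the proof.
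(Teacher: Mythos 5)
Your treatment of the easy cases is sound and essentially matches the paper: the forward direction, the reduction to even $\Delta(G)$ via Theorem~\ref{thm:chenfuko}, the observation that all $\Delta(G)$-vertices lie in $Q$ (your counting argument is a legitimate alternative to the paper's assumption that $Q$ is maximum), and the case where only one of $Q_l$, $Q_r$ carries a $\Delta(G)$-vertex, which you dispatch exactly as the paper does, through the semicore, Theorem~\ref{thm:vert-universal} and Theorem~\ref{thm:seminucleo}.

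The genuine gap is that the case you yourself identify as the hard one --- both $Q_l$ and $Q_r$ containing $\Delta(G)$-vertices, i.e.\ $|S_l| = |S_r| \ge 1$ --- is not proved; you only list ingredients and concede that ``weaving these ingredients together'' remains to be done. That weaving is precisely the substance of the theorem, and your sketch of it is not on firm ground. Extending an arbitrary (even balanced) $\Delta(G)$-edge-coloring of $G_l = G[Q \cup S_l \cup S_t]$ to the $Q_r$--$S_r$ edges is not just ``delicate'': the missing palettes $M(v)$, $v \in Q_r$, can collide at a common neighbor in $S_r$, and nothing in your outline rules this out; observing that each list is large enough locally does not give the required system of distinct colors at the $S_r$ endpoints. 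The paper does not extend a coloring of $G[N[v]]$ at all. It first uses Theorem~\ref{thm:sheila} to reduce to $|Q_r| < |Q|/2$ and $|Q_l| > \Delta(G)/2$ (including a separate trick, absent from your plan, of adding edges to a vertex of $S_r$ to create a graph $G'$ to which Theorem~\ref{thm:sheila} applies when both $|Q_l|, |Q_r| < |Q|/2$); it then completes $G[N[v]]$ to a \emph{saturated} graph $G_l$ by adding edges in a prescribed order, takes a balanced $\Delta(G)$-coloring of $G_l$, and transfers it to $G$ by \emph{mirroring}: the edge $(v_i, w_j)$ with $v_i \in Q_r$, $w_j \in S_r$ receives the color of $(v_i, u_j)$, exploiting $|S_l| = |S_r|$; saturation plus Proposition~\ref{prop:satur_sobra} then guarantees that every color is missed at exactly one vertex, and the bound $|Q_r| < |Q|/2$ controls the single problematic vertex $u_h$ adjacent to a proper subset of $Q_r$, which is what makes the leftover edges colorable from the sets $\ell(v_i)$. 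None of these devices (the $Q_r$-size reduction, the saturated auxiliary graph with ordered edge additions, the $S_l$-to-$S_r$ mirroring) appears in your proposal, so the central case of the theorem remains unproved.
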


\begin{proof}
Clearly, when $G$ is neighborhood-overfull it must be Class 2. We may therefore assume that $G$ is not neighborhood-overfull. If $\Delta(G)$ is odd, by Theorem~\ref{thm:chenfuko}, $G$ is Class 1. Hence, we may assume $\Delta(G)$ is even.

We shall consider that $Q$ is maximum, which implies that every $\Delta(G)$-vertex belongs to $Q$. According to the definition of a split-comparability graph, every $\Delta(G)$-vertex will either be in $Q_l$ or $Q_r$. If only one of such sets ($Q_l$ and $Q_r$), say $Q_l$, has a  $\Delta(G)$-vertex, by Corollary~\ref{coro:v_univer} the subgraph induced by $Q \cup S_l\cup S_t$, which is the semicore of $G$, has a universal vertex. As $G$ is not neighborhood-overfull then its semicore is not overfull. By Theorem~\ref{thm:vert-universal}, since the semicore of $G$ has a universal vertex and is not overfull, then the semicore is Class 1. By Theorem~\ref{thm:seminucleo}, we conclude that $G$ is Class 1 as well. Thus, we may consider that both $Q_l$ and $Q_r$ have $\Delta(G)$-vertices. By Proposition~\ref{prop:qlsl}, every $\Delta(G)$-vertex of $Q_l$ is adjacent to every vertex of $S_l \cup S_t$ and every $\Delta(G)$-vertex of $Q_r$ is adjacent to every vertex of $S_r \cup S_t$. Since both  $Q_l$ and $Q_r$ have $\Delta(G)$-vertices we deduce that $\Delta(G) = |Q| - 1 + |S_l| + |S_t| = |Q| - 1 + |S_r| + |S_t|$; whence $|S_l| = |S_r|$. 

By Theorem~\ref{thm:sheila}, we shall consider that every vertex in $S$ has degree either smaller than $|Q|/2$ or greater than $\Delta(G) / 2$. By definition, $Q_r \cap Q_l = \emptyset$, so $Q_r$ and $Q_l$ cannot have cardinality greater than $\Delta(G) / 2$, once $\Delta(G) > |Q|$. If $Q_r$ and $Q_l$ have cardinality smaller than $|Q|/2$ then $Q_t \neq \emptyset$. Moreover,  $|Q_t| + |Q_r| = |Q| - |Q_l| > |Q|/2$. In this case, let $v \in S_r$ be a vertex with degree $|Q_r|$ and let $G'$ be the graph obtained from $G$ by the addition of edges between $v$ and vertices of $Q_t$ so as to ensure that $|Q|/2 \leq d_{G'}(v) \leq \Delta(G)/2$. It is easy to see that $\Delta(G') = \Delta(G)$. By Theorem~\ref{thm:sheila}, $G'$ is Class 1. Hence $G$, which is a subgraph of $G'$, is Class 1 as well. We may thus assume, without loss of generality, that $|Q_r| < |Q|/2$ and $|Q_l| > \Delta(G) / 2$. In sum, we shall consider the case where $\Delta(G)$ is even,  $|S_l| = |S_r|$, $|Q_l| > \Delta(G) / 2$ and $|Q_r| < |Q|/2$.

We label the vertices of $Q$ as in Theorem~\ref{thm:split-comp}, i.~e., ($v_1, v_2, \ldots , v_r$). Similarly, we label the vertices of $S_l$ as ($u_1, u_2, \ldots, u_s$) and of $S_r$ as ($w_1, w_2, \ldots, w_s$), where $s = |S_l| = |S_r|$. Finally, we label the vertices of $S_t$ as ($x_1,x_2,\ldots,x_{t}$), where $t = |S_t|$. Let $v \in Q_l$ be a $\Delta(G)$-vertex. In order to color the edges of $G$ we will construct a saturated graph $G_l$. Such graph will initially be defined as $G[N[v]]$, then some extra edges will be added to ensure it is saturated. We label the vertices  of $G_l$ with the same labels used in $G$. Note that, by Proposition~\ref{prop:qlsl}, $V(G_l) = Q \cup S_l \cup S_t$. While $G_l$ is not saturated, the following types of edges will be added, respecting the following order for addition: group 1 $= \{$ $(u_i,u_j)$, $1 \leq i < j \leq s \}$; group 2 $= \{$ $(u_i,v_j)$, $1 \leq j < q$, $1 \leq i \leq s$ e $(u_i,v_j) \not\in E(G) \}$; group 3 $= \{$ $(x_i,v_j)$, $1 \leq i \leq t$, $1 \leq j \leq r$ e $(x_i,v_j) \not\in E(G) \}$; group 4 $= \{$ $(u_i,v_j)$, $1 \leq i \leq s$ e $q \leq j \leq r \}$. Within each group of edges, we follow an order for such additions so as to make sure that an edge incident to a vertex $u_{i+1}$ (or $x_{i+1}$) will only be added after every possible edge incident to $u_i$ (or $x_{i}$) has already been added. 

The graph $G_l$ thus obtained has a universal vertex, an odd number of vertices and is saturated; therefore, by Theorem~\ref{thm:vert-universal}, $G_l$ is Class 1. By Lemma~\ref{lem:colo_equi}, $G_l$ has a balanced edge coloring with $\Delta(G)$ colors. Let $C = \{1, 2, \ldots, \Delta(G)\}$ be a set of colors and $\beta:E(G_l) \rightarrow C$ be a balanced edge coloring of $G_l$. We will now obtain an edge coloring $\alpha:E(G) \rightarrow C$ for $G$ from $\beta$ by first assigning: $\alpha(v_i,v_j) = \beta(v_i,v_j)$, $1 \leq i < j \leq r$; $\alpha(v_i,u_j) = \beta(v_i,u_j)$, $1 \leq i \leq p$ and $1 \leq j \leq s$, if $(v_i,u_j) \in E(G)$; $\alpha(v_i,x_j) = \beta(v_i,x_j)$, $1 \leq i \leq r$ and $1 \leq j \leq t$, if $(v_i,x_j) \in E(G)$; $\alpha(v_i,w_j) = \beta(v_i,u_j)$, $q \leq i \leq r$ and $1 \leq j \leq s$, if $(v_i,w_j) \in E(G)$. Note that all edges of $G$ that were not colored, must be incident to a vertex of $S_r$. We next show how to assign colors to such edges so as to obtain a $\Delta(G)$-edge-coloring for $G$. 

Consider the set $\{v_q, v_{q+1}, \ldots, v_r\}$ in $G_l$. We will abuse notation and call this set  $Q_r$ in $G_l$ too. There might exist a vertex $u_h \in G_l$ such that $u_h$ is adjacent to a non-empty and proper subset of $Q_r$. If there is such a vertex $u_h$, then it is unique, given the order specified for the addition of edges to $G_l$ in Group 4. Assume $u_h$ exists and let $d$ be the number of neighbors of $u_h$ in $Q_r$. Since $|Q_r| < |Q|/2 < \Delta(G)/2$ and every vertex not adjacent to $u_h$ is in $Q_r$, then the degree of $u_h$ in $\overline{G_l}$ is less than $\Delta(G)/2$. Since $G_l$ is saturated, we conclude that there must be a vertex $u_{h+1}$ not adjacent to any vertex of $Q_r$. As $|S_l| = |S_r|$, at least one vertex $w_{h+1}$ in $S_r$ has no colored edges. Since the coloring $\beta$ is balanced and $G_l$ is saturated, by Proposition~\ref{prop:satur_sobra}, every color misses exactly one vertex of $G_l$. Let $\ell(v)$ be the set of colors missed in vertex $v$ in edge coloring $\beta$. Observe that $\Delta(G) = d_{G_l}(v_i) + |\ell(v_i)|$, for every vertex $v_i \in Q_r$. Therefore, for every vertex $v_i \in Q_r$  there are enough colors in $\ell(v_i)$ to color the uncolored edges of $G$ incident to $v_i$.

Let $C'$ be the set of colors incident to $w_h$ in coloring $\alpha$. Note that $\alpha(v_i,w_h) = \beta(v_i,u_h)$, for $q \leq i < q + d $. For every $v \in Q_r$ where $\ell(v)$ has a color $c \in C'$, if $(v,w_{h+1}) \in E(G)$ define $\alpha(v,w_{h+1}) = c$ and remove the color $c$ from $\ell(v)$ (even if no such edge exists). We can see $\ell(v)$ has enough colors to color the remaining edges incident to $v$, since if $(v,w_{h+1}) \not\in E(G)$ then $v$ is a not $\Delta(G)$-vertex in $G$. Note that the coloring $\alpha$ obtained so far is an edge coloring because $w_{h+1} \neq w_h$. 

Now we have only some edges $(v_i,w_j)$ to color, where $v_i \in Q_r$ e $w_j \in S_r$. By construction, $\ell(v_i) \cap C' = \emptyset$ and $\ell(v_i) \cap \ell(v_k) = \emptyset$, for any two distinct vertices $v_i$ and $v_k$ of $Q_r$. Therefore, we can assign the remaining colors of $\ell(v_i)$ arbitrarily for the uncolored edges incident to $v_i$ in order to obtain an edge coloring with $\Delta(G)$ colors. So, $G$ is Class 1.
\end{proof}

By Lemma~\ref{lem:compover}, every split-comparability graph satisfies the hypothesis of the Overfull Conjecture. 

\begin{lemma}
Every split-comparability graph $G$ has $\Delta(G) > n/3$.
\label{lem:compover}
\end{lemma}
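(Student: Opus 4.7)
My plan is to exhibit one vertex of $Q$ whose degree alone already exceeds $n/3$, and the appropriate vertex is supplied by Proposition~\ref{prop:qlsl}. That proposition produces, whenever $Q_l$ is nonempty, a vertex in $Q_l$ adjacent to every vertex of $S_l\cup S_t$, and symmetrically a vertex in $Q_r$ adjacent to every vertex of $S_r\cup S_t$. Such a vertex already has $|Q|-1$ neighbors inside the clique $Q$, so after renaming sides so that $|S_l|\geq |S_r|$ I will have the bound
\[
\Delta(G)\;\geq\; |Q|-1+|S_l|+|S_t|.
\]

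Then I would plug this into the target inequality. Writing $n=|Q|+|S_l|+|S_t|+|S_r|$, the claim $\Delta(G)>n/3$ is implied, after clearing denominators, by
\[
2|Q|+2|S_l|+2|S_t|-|S_r|\;>\;3.
\]
Since $|S_l|\geq|S_r|$ the left-hand side is at least $2|Q|+|S_l|+2|S_t|\geq 2|Q|$. Whenever $G$ has at least one edge we have $|Q|\geq 2$, whence $2|Q|\geq 4>3$ and the bound is established.

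The only obstacles are bookkeeping ones. First, I have to check that Proposition~\ref{prop:qlsl} really applies on the side that I pick: this is automatic, because $|S_l|>0$ forces $Q_l\neq\emptyset$ by the very definition of $S_l$, and in the boundary case $|S_l|=|S_r|=0$ with $S_t\neq\emptyset$ each vertex of $S_t$ has neighbors in both $Q_l$ and $Q_r$ by form~(iii) of Theorem~\ref{thm:split-comp}, so both candidate vertices are available. Second, the degenerate case $|Q|=1$ yields an edgeless graph (the single vertex of $Q$ has no neighbor in $S$ by maximality of $Q$, and $S$ is stable), which lies outside the scope of the Overfull Conjecture and which I would dispose of in one sentence. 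I do not anticipate any genuinely hard step beyond this routine case analysis.
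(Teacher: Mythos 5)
Your argument is correct and rests on exactly the same ingredients as the paper's proof --- the vertex supplied by Proposition~\ref{prop:qlsl} on the larger side (after arranging $|S_l|\ge|S_r|$) gives $\Delta(G)\ge |Q|-1+|S_l|+|S_t|$, and $|Q|\ge 2$ closes the estimate --- so it is essentially the paper's proof run directly instead of by contradiction (the paper writes $\Delta(G)=n-|S_r|-1$, assumes $\Delta(G)\le n/3$, deduces $|S_l|+|S_r|>n-2$, and contradicts $|Q|\ge 2$). Your treatment of the degenerate corner $|Q|=1$ reads the definitions slightly differently (the paper asserts a universal vertex there, while you note that maximality of $Q$ would force an edgeless graph), but under the paper's convention that every vertex of $S$ has a nonempty neighborhood of one of the three forms of Theorem~\ref{thm:split-comp} this case is immaterial and does not affect the substance of the proof.
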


\begin{proof}
Let $G$ be a split-comparability graph. Recall that we assumed that both $Q$ and $S$ are not empty, therefore, $n \geq 2$. If $|Q| = 1$ then $G$ has a universal vertex, so $\Delta(G) = n-1$ and $\Delta(G) >  n/3$ when $n \geq 2$. We may thus assume $|Q| \geq 2$. Adjust notation, if necessary, so that $|S_l| \geq |S_r|$. By Proposition~\ref{prop:qlsl}, $v_1$ is adjacent to every vertex in $S_l$ and $S_t$. So $d_Q  = |S_l| + |S_t|$. In addition, $\Delta(G) = |Q| - 1 + d_Q$. We therefore deduce that $\Delta(G)  =  |Q| - 1 + |S_l| + |S_t| = |Q| + |S| - |S_r| - 1 = n - |S_r| - 1$.

Assume the contrary, that is, $\Delta(G) \leq n/3$, or alternatively, $\Delta(G) - n/3 \leq 0$. Therefore, $\Delta(G) - n / 3 = n - |S_r| - 1 - n/3 = 2n/3 - 1 - |S_r| \le 0$.
We conclude that $|S_r|  \geq 2n/3 - 1$. Since $|S_l| \geq |S_r|$, thus $|S_l| + |S_r| \geq  4n/3 - 2 > n - 2$. In other words, if $\Delta(G) \leq n/3$, as  $|Q| \geq 2$, this implies that $n = |Q|+|S| > 2 + n - 2$, a contradiction.
\end{proof}

\section{Conclusion}

We proved that every split-comparability graph is Class 2 if and only if it is neighborhood-overfull. There are important implications of this result. First, the Overfull Conjecture holds for this class, by Lemma~\ref{lem:compover} and Theorem~\ref{thm:principal}. The Conjecture of Figueiredo, Meidanis and Mello, which states that a chordal graph is Class 2 if and only if it is neighborhood-overfull, also holds for this subclass of chordal graphs. Finally, since a neighborhood-overfull graph can be recognized in polynomial time, our proof implies that the Classification Problem can be solved in polynomial time for split-comparability graphs.

\footnotesize
\section{Acknowledgements}
This research was supported by \textsc{fapesp}, \textsc{Funda\c{c}\~{a}o Arauc\'{a}ria}, \textsc{capes} and \textsc{cnp}q Brazilian Funding Agencies.

\bibliography{mybibfile}

\begin{thebibliography}{10}
\expandafter\ifx\csname url\endcsname\relax
  \def\url#1{\texttt{#1}}\fi
\expandafter\ifx\csname urlprefix\endcsname\relax\def\urlprefix{URL }\fi
\expandafter\ifx\csname href\endcsname\relax
  \def\href#1#2{#2} \def\path#1{#1}\fi

\bibitem{vizi64}
V.~G. Vizing, On an estimate of the chromatic class of a $p$-graph, Diskret.
  Analiz. 3 (1964) 25--30.

\bibitem{holy81}
I.~Holyer, The {N}{P}-completeness of edge-coloring, SIAM Journal on Computing
  10 (1981) 718--720.

\bibitem{chehil86}
A.~G. Chetwynd, A.~J.~W. Hilton, Star multigraphs with three vertices of
  maximum degree, Mathematical Proceedings of the Cambridge Philosophical
  Society 100 (1986) 303--317.

\bibitem{plan81}
M.~J. Plantholt, The chromatic index of graphs with a spanning star, Journal of
  Graph Theory 5 (1981) 45--53.

\bibitem{horo92}
D.~G. Hoffman, C.~A. Rodger, The chromatic index of complete multipartite
  graphs, Journal of Graph Theory 16 (1992) 159--163.

\bibitem{chfuko95}
B.~Chen, H.~Fu, M.~Ko, Total chromatic number and chromatic index of split
  graphs, J. Combin. Math. Combin. Comput 17 (1995) 137--146.

\bibitem{orti98}
C.~Ortiz, N.~Maculan, J.~L. Szwarcfiter, Characterizing and edge-colouring
  split-indifference graphs, Discrete Applied Mathematics 82~(1) (1998) 209 --
  217.

\bibitem{alme12}
S.~M. Almeida, Colora\c{c}\~{a}o de arestas em grafos split, Ph.D. thesis,
  Universidade Estadual de Campinas, Campinas (3 2012).

\bibitem{foha76}
S.~Foldes, P.~L. Hammer, Split graphs, Universit{\"a}t Bonn. Institut f{\"u}r
  {\"O}konometrie und Operations Research, 1976.

\bibitem{fimeme91}
C.~M.~H. de~Figueiredo, J.~Meidanis, C.~P. de~Mello, Local conditions for
  edge-coloring, Journal of Combinatorial Mathematics and Combinatorial
  Computing 32 (2000) 79--92.

\bibitem{orvi96}
C.~Ortiz, M.~Villanueva, On split-comparability graphs, in: II ALIO-EURO
  Workshop on Pratical Combinatorial Optimization, Valparaiso, Chile, 1996, pp.
  91--105.

\bibitem{fofu69}
J.~Folkman, D.~R. Fulkerson, Edge colorings in bipartite graphs. combinatonal
  mathematics and its applications, University of North Carolina Press (1969)
  561--577.

\bibitem{mafi10}
R.~C.~S. Machado, C.~M.~H. de~Figueiredo, Decompositions for edge-coloring join
  graphs and cobipartite graphs, Discrete Applied Mathematics 158 (2010)
  1336–1342.

\end{thebibliography}

\end{document}